\newtheorem{theorem}{Theorem}[section]
\newtheorem{lemma}{Lemma}[section]
\newtheorem{corollary}{Corollary}[section]
\newtheorem{definition}{Definition}[section]
\newtheorem{proposition}{Proposition}[section]
\newtheorem{remark}{Remark}[section]
\begin{document}
\title
{The Multidimensional Damped Wave Equation:  
Maximal Weak Solutions for Nonlinear Forcing via Semigroups and
Approximation}
\author{Joseph W. Jerome\footnotemark[1]}
\date{}
\maketitle
\pagestyle{myheadings}
\markright{Multidimensional damped wave equation}
\medskip

\vfill
{\parindent 0cm}

{\bf 2010 AMS classification numbers:} 
35L05, 35L20, 47D03 

\bigskip
{\bf Key words:} 
Nonlinear damped wave equation, operator forcing, semigroup,  
weak solution, telegraph equation

\fnsymbol{footnote}
\footnotetext[1]{Department of Mathematics, Northwestern University,
                Evanston, IL 60208. \\In residence:
George Washington University,
Washington, D.C. 20052}

\begin{abstract}
The damped nonlinear wave equation, also known as the nonlinear 
telegraph equation,
is studied within the
framework of semigroups and eigenfunction approximation. 
The linear semigroup assumes a central role: it is bounded 
on the domain of its generator 
for all time $t \geq 0$.
This permits eigenfunction approximation within the semigroup framework,
as a tool for the study of weak
solutions.
The semigroup convolution formula, known to be rigorous on the generator
domain, is extended to the interpretation of
weak solution on an arbitrary time interval. 
A separate approximation theory can be developed
by using the invariance of the semigroup on eigenspaces 
of the Laplacian as the system evolves.  

For (locally) bounded continuous
$L^{2}$ forcing, there is a natural derivation of a maximal solution,
which can logically include a constraint on the solution as well.
Operator forcing allows for
the incorporation of concurrent physical processes. 
A significant feature of the proof in the nonlinear case is verification
of successive approximation without standard fixed point analysis. 
\end{abstract}
\section{Introduction.}
\label{Introduction}
The damped nonlinear wave equation, also known as the nonlinear  
telegraph equation,
has assumed renewed importance in recent years
(cf.\ \cite{Ber,El-Azab,Jang,LW2,Mawhin,Mawhin2}).
Reference \cite{LW2} is novel
because of its
connection to MEMS modeling. Operator forcing, induced by a concurrent
physical process,  
and constraints come into
play in \cite{LW2} 
(see Table 1, p.\ 472, $\gamma > 0, \beta = 0$, for an
unresolved case within the framework we are studying). 

In this article, we consider the following 
initial/boundary-value problem.
The spatial domain $\Omega \subset {\mathbb R}^{N}$ represents a bounded
convex domain or a bounded $C^{2}$ domain. For these domains, $\nabla^{2}$ 
represents an algebraic and topological isomorphism of  
$H^{2}(\Omega) \cap H^{1}_{0}(\Omega)$ onto $L^{2}(\Omega)$
(\cite{A,GT}).
The system considered is the following, 
where subscripts represent
(partial) derivatives with respect to $t$. 
\begin{eqnarray}
u_{tt} &=& - \nu u_{t} + \kappa \nabla^{2} u + {\mathcal F}(u), 
\nonumber \\
u({\bf x},t)  &=& 0, \; {\bf x} \in \partial \Omega, t> 0, \nonumber \\ 
(u({\bf x}, 0), u_{t}({\bf x}, 0)) &=& (u_{0}({\bf x}), v_{0}({\bf x})), 
\; {\bf x} \in \Omega. 
\label{cibvp}
\end{eqnarray}
Here, $\nu$ and $\kappa$ are positive physical constants and 
$u$ depends on the physical context. 
${\mathcal F}$ represents a nonlinear forcing
term. 
Typically, the nonlinear problem is posed locally on a closed
bounded time interval, $J = [0, T_{1}]$, and is extended to
a maximal interval $[0, \tau)$ in ${\mathbb R}$. 
We will also consider the impact on solutions of
this system if a continuous operator constraint is adjoined. One requires,
on a subinterval of the maximal time interval $[0, \tau)$: 
\begin{equation}
\label{concons}
{\mathcal G}(u(t), u_{t}(t)) >  0.
\end{equation}
We now summarize the principal results in the article. In Proposition
\ref{contraction}
we construct the semigroup $T(t)$ with generator $U$ 
for the linear part of (\ref{cibvp}). 
Although this contractive semigroup has been extensively
investigated, 
its global stability, indeed decay, on the {\it generator} domain
appears to be recent. Generator spectral estimates are required.
In section three, we establish the convolution
formula on the frame space 
${\mathcal H}$
to define a weak solution of the 
inhomogeneous problem ${\mathcal F}(u) = {\tilde F}$ in Theorem \ref{WEUL}. 
The time interval is arbitrary, due to the global character of the
semigroup,
and facilitated by
eigenfunction approximation. Global stability on the generator domain
enters here.  
The eigenfunctions of the
Laplacian enter in another fundamental way; their invariance under the semigroup
gives rise to 
a rigorous construction of separation of variables
and a corresponding  useful approximation result, independent of the existence results. 
The relevant theorem is Theorem \ref{EIG}.
In section four, we consider
nonlinear forcing in $L^{2}$. 
We define a local solution in Theorem \ref{localsol} via
successive approximation, and use the local solution  
to define a maximal solution in Theorem \ref{maxsolex}. 
This leads to the result derived in Proposition \ref{evhmax} that every weak
solution can be extended to a maximal solution. 
In addition, we prove uniqueness in the case of local Lipschitz forcing 
in Proposition \ref{Lipunique}.
In this section, we also consider an application of the results to
the constrained case and obtain a maximal subinterval on which the
constraint holds in Theorem \ref{conexist}. The constraint may be applied to the
pair $(u, u_{t})$. 
Summary remarks and an appendix close the article.
\section{The Semigroup}
We discuss an explicit construction of the semigroup associated with the
linear part of the equation in (\ref{cibvp}). 
We will develop special features necessary
for the analysis given here. The semigroup $T(t)$ will initially be
defined on a Hilbert space ${\mathcal H}$. Much is known about the norm of 
$T(t)$ on this space. For example, 
the time decay of the semigroup for manifolds without boundary was established
in \cite{Rauch}; the results 
of these authors also hold in one dimension for intervals. 
The stability of the semigroup is under study by many mathematicians.  
We are particularly interested here in solutions contained in
the domain $D(U)$ of the semigroup generator. Instrumental to this is the
study of the norm of $T(t)$ when restricted to this smoother space. 
This distinction assumes especial importance in the work of Kato
\cite[Ch.\ 6]{J2} in his construction of the evolution operator.

After a standard definition, we
will concisely state and derive the required results. We will make
extensive use of parts (2,3) of Proposition \ref{contraction}, stated
below. 

Define $v = u_{t}$, and 
\begin{equation}
\label{defU}
U = \left[\begin{array}{cc}                                                  
0&{\mathcal I}\\
\kappa \nabla^{2} & -\nu {\mathcal I} 
 \end{array}\right]. 
\end{equation}
Here ${\mathcal I}$ represents the identity. Then the equation, 
$$
\left[\begin{array}{cc}                                                
u_{t}\\
 v_{t} 
\end{array}\right] =
U \left[\begin{array}{cc}                                                
u\\
v
\end{array}\right], 
$$
is a standard equivalent representation of 
$u_{tt} = -\nu u_{t} + \kappa \nabla^{2} u$.
In order to position this in an operator framework, we 
formulate the following definitions.
\begin{definition}
\label{functionsp}
Define the
function space,
$
{\mathcal H} = H^{1}_{0}(\Omega) \times L^{2}(\Omega),
$
and the domain $D(U)$ of $U$,
$
D(U) = 
(H^{2}(\Omega) \cap H^{1}_{0}(\Omega)) \times H^{1}_{0}(\Omega).
$
We shall employ an equivalent norm on $H^{1}_{0}(\Omega)$ by utilizing the inner
product:
\begin{equation}
(u,w)_{H^{1}_{0}} = \kappa (\nabla u, \nabla w)_{L^{2}}.
\label{equivnorm}
\end{equation}
For elements 
$
f_{j} =  \left[\begin{array}{cc}                                                
u_{j}\\
v_{j}
\end{array}\right] 
$
in $D(U), j=1,2$,
we employ the $D(U)$ inner product given by the sum,
\begin{equation}
(f_{1}, f_{2})_{D(U)} = 
(\nabla^{2} u_{1}, \nabla^{2} u_{2})_{L^{2}} +
(u_{1},u_{2})_{H^{1}_{0}} +(v_{1}, v_{2})_{H^{1}_{0}}.
\label{DUnorm}
\end{equation}
We employ the notation ${\mathcal D}(U)$ for the first components of the
elements of $D(U)$. The corresponding inner product is defined by
the truncation of (\ref{DUnorm}):
$$
(u_{1}, u_{2})_{{\mathcal D}(U)} = 
(\nabla^{2} u_{1}, \nabla^{2} u_{2})_{L^{2}} +
(u_{1},u_{2})_{H^{1}_{0}}.
$$
For clarity, throughout section two 
we will use vector notation to represent the
components in the Cartesian products which represent ${\mathcal H}$ and $D(U)$.
Throughout the remainder of the article, for $X$ a Banach space, and $J$ a
compact time interval, $C(J; X)$ is the usual Banach space with norm 
$\|u\|_{C(J; X)}  := \max_{t \in J} \|u(t)\|_{X}$.
\end{definition}

The following proposition contains the results needed for the subsequent 
sections.
\begin{proposition}
\label{contraction}
$U$ is a closed linear operator in ${\mathcal H}$. 
Denote by $\{\lambda_{n}\}$ the (positive) eigenvalues of 
$-\kappa \nabla^{2}$ on
the domain $H^{2}(\Omega) \cap H^{1}_{0}(\Omega)$.
The following properties hold.
\begin{enumerate}
\item
The resolvent set of $U$ satisfies
$\rho(U) \supset \{\lambda: 
{\rm Re}\; \lambda > \theta \}$, where, for 
$ \theta_{n} = 4 \lambda_{n} - \nu^{2}$, 
\begin{eqnarray}
\theta &=& -\nu/2, \; \mbox{if} \; \theta_{n} \geq 0, \forall n \geq 1,\\
\theta &=& \max_{n: \theta_{n} < 0} \{-\nu/2 + \sqrt{-\theta_{n}}/2 \}, \;
\mbox{otherwise}.
\label{dichotomy}
\end{eqnarray}
In particular, the imaginary axis is contained in $\rho(U)$.
\item
$U$ is the generator of a 
strongly continuous semigroup $T(t), t \geq 0$, on ${\mathcal H}$, 
which is contractive:
\begin{equation} 
\label{decayH}
\|T(t)\|_{{\mathcal H} } \leq 1, \; t \geq 0.
\end{equation}
Suppose $T_{1} > 0$ is arbitrary. 
If $G \in D(U)$, and $F \in C(J; D(U))$, where $J = [0, T_{1}]$, then
\begin{equation}
\label{represent}
V(t) = T(t)G + \int_{0}^{t} T(t-s) F(s) \; ds 
\end{equation} 
is in $C(J; D(U))$ and
satisfies the initial value problem,
\begin{equation}
\label{DWE}
V_{t} = U V(t)  + F(t), \; V(0) = G, \; 0 < t \leq T_{1}. 
\end{equation}
\item
$\|T(t)\|_{D(U)}$, defined with respect to  
$D(U)$, decays to zero as $t \rightarrow \infty$. 
In particular, there is a number $\omega$ such that this norm satisfies 
$\|T(t)\|_{D(U)} \leq \omega, t \geq 0$. 
\end{enumerate}
\end{proposition}
\begin{proof}
The property that $U$ is closed follows routinely from the definitions. 
If
$$
 \left[\begin{array}{cc}                                                
u_{n}\\
v_{n}
\end{array}\right] \rightarrow 
 \left[\begin{array}{cc}                                                
u\\
v
\end{array}\right], \;\; 
U \left[\begin{array}{cc}                                                
u_{n}\\
v_{n}
\end{array}\right] \rightarrow 
 \left[\begin{array}{cc}                                                
w\\
z
\end{array}\right], \; \mbox {in} \; {\mathcal H}, \; 
 \left[\begin{array}{cc}                                                
u_{n}\\
v_{n}
\end{array}\right] \; \; \in D(U), 
$$
then we conclude directly that $v=w$ and the $L^{2}$ limit of $\kappa
\nabla^{2} u_{n}$ is $z + \nu w$. 
From this relation, we conclude that $\nabla u$ has components with  
$L^{2}$ derivative, so that
$$
 \left[\begin{array}{cc}                                                
u\\
v
\end{array}\right] \; \in D(U), 
\; \mbox{with image} \; 
 \left[\begin{array}{cc}                                                
w\\
z
\end{array}\right].
$$
It follows that $U$ is closed. 

To prove statement (1), suppose that Re$\; \lambda > \theta$, and consider
the formal system,
$$
(\lambda {\mathcal I}  -U) \left[\begin{array}{cc}                                            
u\\
v
\end{array}\right] 
 = \left[\begin{array}{cc}                                                
w\\
z
\end{array}\right].
$$
Since $\lambda$ is permitted to be complex, for this part 
of the proof we interpret ${\mathcal H}$ and $D(U)$ as complex Hilbert
spaces. 
We show that
$\lambda {\mathcal I} - U$ 
is an algebraic isomorphism from $D(U)$ to ${\mathcal H}$, i.e.\thinspace,
the formal system is uniquely solvable.
By using the definition of $U$, we see that 
$v = \lambda u -w$, provided
$u$ is determined. 
This means that it is necessary to show that the 
following differential
equation, with homogeneous boundary values, 
determines a unique solution $u$ in $H^{2}(\Omega) \cap H_{0}^{1}(\Omega)$:
$
-\kappa \nabla^{2} u + \lambda (\lambda + \nu)u = z + (\lambda + \nu)w, 
$
for this pair $w,z$.
Now the spectrum of the self-adjoint operator $- \kappa \nabla^{2}$ consists of
discrete positive eigenvalues $\lambda_{n}$, with limit $\infty$.   
Thus, it must be shown that the product
$-\lambda (\lambda + \nu)$ excludes these numbers. A calculation, based
upon proof by contradiction, 
shows that this is achieved for 
$
\mbox{Re}\; \lambda > \theta. 
$ 
We conclude that
$\lambda {\mathcal I} - U$ is surjective and injective from $D(U)$ to
${\mathcal H}$. It is, by the definition of norms, a bounded linear
operator between these spaces. 
Its algebraic inverse, $R(\lambda, U)$, is 
a bounded linear operator, as follows from the open mapping theorem.
Indeed, a bound on the $D(U)$ norm, which dominates the ${\mathcal H}$
norm, is obtained.

We now prove part (2), and operate within real Hilbert spaces. 
We verify that the resolvent of $U$, 
$R(\lambda, U)$,
satisfies the norm inequality in ${\mathcal H}$,
\begin{equation}
\|R(\lambda, U)\| \leq \frac{1}{\lambda}, \; \lambda > 0.
\label{resolvent}
\end{equation}
The Hille-Yosida theorem \cite{BB,EN} then implies that $U$ generates a
contraction semigroup.
To establish (\ref{resolvent}), it is sufficient to show 
that the inner products 
$(Uf, f) = (f, Uf)$ are nonpositive, as is seen by expanding the square of
the ${\mathcal H}$-norms of both sides of $(\lambda -\theta) f - Uf = g$.
Now we compute, for 
 $f=\left[\begin{array}{cc}                                                
u\\
v
\end{array}\right],$
$$
(Uf, f)_{{\mathcal H}} = (v, u)_{H^{1}_{0}} + (\kappa \nabla^{2} u -
\nu v, v)_{L^{2}}.
$$
After integration by parts, the rhs reduces to $-\nu (v, v)_{L^{2}}$,
which is nonpositive. Note that the cancelation involved here depends on
the equivalent norm introduced in (\ref{equivnorm}). 

The statement that (\ref{represent}) is well defined and provides a
solution of (\ref{DWE}) depends fundamentally on the characterization of
$D(U)$ in terms of the limit of difference quotients of $T(t)$.  
The result follows by direct computation. The details of the computation
may be found in \cite[Section 6.4]{J2}, in the general case of the
evolution operator ${\mathcal U}(t,s)$. 
For the current result, ${\mathcal U}(t,s) = T(t-s)$.

To establish part (3), we use the lemma cited at the conclusion of the
proof, in conjunction with part (1). This concludes the proof. 
\end{proof}
The following lemma is quoted from
\cite[Theorem 3]{S}. 
\begin{lemma}
A bounded $C_{0}$-semigroup $e^{At}$ on a Banach space $X$ with 
(unbounded) generator
$A$ satisfies
$$
\|e^{At} (A - \lambda {\mathcal I})^{-1}\| \rightarrow 0, \; \mbox{as} \; t
\rightarrow \infty, \lambda \in \rho(A), 
$$
if and only if the imaginary axis is in the resolvent set of $A$: 
$i {\mathbb R} \subset \rho(A). 
$
\end{lemma}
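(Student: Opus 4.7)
My plan is to prove the two implications separately, exploiting the Dunford calculus representation of the semigroup and the smoothing supplied by $(A-\lambda\mathcal{I})^{-1}$. For sufficiency ($i\mathbb{R}\subset\rho(A)$ implies decay), I would begin with the Bromwich inversion
$$e^{At}x=\frac{1}{2\pi i}\lim_{R\to\infty}\int_{\sigma-iR}^{\sigma+iR}e^{zt}R(z,A)x\,dz,$$
valid on $D(A)$ for any $\sigma>0$ (boundedness of $T(t)$ forces the growth bound to be nonpositive), and specialize to $x=(A-\lambda\mathcal{I})^{-1}y$, or if more decay is needed to $x=(A-\lambda\mathcal{I})^{-2}y$. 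The resolvent identity expresses the integrand as a combination of terms of the form $(z-\lambda)^{-k}R(z,A)$, providing polynomial decay on vertical lines. The hypotheses---$i\mathbb{R}\subset\rho(A)$ together with $\sigma(A)\subset\{\mathrm{Re}\,z\le 0\}$---then permit shifting the contour to the imaginary axis without crossing a pole. On $\mathrm{Re}\,z=0$ one factors out $e^{ist}$ and applies a Riemann--Lebesgue type argument to extract operator-norm decay, uniform in $\|y\|\le 1$.

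For necessity (decay implies $i\mathbb{R}\subset\rho(A)$), I would argue by contradiction: suppose $is_{0}\in\sigma(A)$. Boundedness of $T(t)$ excludes spectrum in the open right half-plane, so $is_{0}$ lies on the boundary of $\sigma(A)$ and, in the reflexive case, is an approximate eigenvalue, furnishing unit vectors $x_n\in D(A)$ with $(A-is_{0})x_n\to 0$. Setting $y_n=(A-\lambda\mathcal{I})^{-1}x_n$, the variation-of-constants identity
$$e^{-is_{0}t}T(t)y_n-y_n=\int_{0}^{t}e^{-is_{0}s}T(s)\bigl[x_n+(\lambda-is_{0})y_n\bigr]\,ds$$
shows that $T(s)y_n$ tracks the rotation $e^{is_{0}s}y_n$ on bounded $s$-intervals as $n\to\infty$, which is incompatible with $\|T(t)(A-\lambda\mathcal{I})^{-1}\|\to 0$ after an appropriate choice of $t$ and subsequence.

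The principal obstacle is the contour shift in the sufficiency direction: qualitative control of $R(is,A)$ as $|s|\to\infty$ is not automatic from the hypothesis $i\mathbb{R}\subset\rho(A)$ alone. I would handle this by first restricting to $y\in D(A^{k})$ for $k$ large enough that the contour integrand is absolutely integrable, justifying the exchange of the Cauchy principal-value limit with the operator-norm integration, and then extending by density using the uniform operator-norm character of the conclusion. A secondary subtlety in the necessity direction is possible residual spectrum on $i\mathbb{R}$ in non-reflexive $X$, which one handles by passing to the adjoint semigroup on an appropriate subspace before applying the approximate-eigenvector argument above.
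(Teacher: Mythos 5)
The paper does not actually prove this lemma: it is quoted verbatim from \cite[Theorem 3]{S}, with an equivalent form attributed to \cite{BD} and the special case $\lambda=0$ to \cite{B,B2} and \cite{Phong}. So the comparison is necessarily with those sources rather than with an in-paper argument. Your necessity direction is essentially the standard one and is sound; in fact it is simpler than you make it. For any closed operator the topological boundary of the spectrum is contained in the approximate point spectrum, and boundedness of the semigroup already confines $\sigma(A)$ to the closed left half-plane, so any $is_{0}\in\sigma(A)$ automatically admits approximate eigenvectors; no detour through reflexivity or the adjoint semigroup is needed. Your observation that $y_{n}=(A-\lambda\mathcal{I})^{-1}x_{n}$ satisfies $\|y_{n}\|\rightarrow |is_{0}-\lambda|^{-1}>0$ while $T(t)y_{n}-e^{is_{0}t}y_{n}\rightarrow 0$ uniformly on compact time intervals then yields the contradiction with operator-norm decay.

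The sufficiency direction, however, contains a genuine gap, located exactly where you flag ``the principal obstacle.'' The hypothesis $i\mathbb{R}\subset\rho(A)$ gives no quantitative control on $\|R(is,A)\|$ as $|s|\rightarrow\infty$: for bounded semigroups this quantity can grow arbitrarily fast along the imaginary axis, which is precisely why the conclusion is only non-uniform decay and why \cite{BD} must calibrate the decay rate against the resolvent growth. Your proposed remedy---restricting to $y\in D(A^{k})$ so that the integrand gains factors of $(is-\lambda)^{-k}$---does not close the gap: by the resolvent identity the integrand on the line $\mathrm{Re}\,z=0$ is of size $\|R(is,A)\|/|s|^{k}$, which need not be integrable, or even bounded, for any fixed $k$. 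Hence the contour cannot be pushed to the imaginary axis and the Riemann--Lebesgue step is never reached. The known proofs circumvent this: Batty's route is a Tauberian theorem for Laplace--Stieltjes transforms, while the contour-integral route (Newman--Korevaar, as adapted in \cite{BD}) uses a finite contour of radius $R$ with the multiplier $(1+z^{2}/R^{2})$ inserted to annihilate the integrand at the endpoints, trading the absent uniform resolvent bound for an $O(1/R)$ error absorbed at the end. One of these devices---or simply the citation the paper itself relies on---is required; as written, the forward implication is not established.
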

An equivalent form of this result was obtained earlier in \cite{BD}.
The special case of this result for $\lambda = 0$ was obtained in
\cite{B,B2} and \cite{Phong}.  
\section{Weak Solutions for the Inhomogeneous Equation}
In this section, 
we will construct weak solutions of the special case  
of (\ref{cibvp}), usually referred to as the linear inhomogeneous
equation.
This requires the introduction of the appropriate approximation spaces. We
begin with these.
\subsection{Approximation subspaces and projections}

The eigenfunctions of the `Laplacian' $- \kappa \nabla^{2}$,
denoted $\{\phi_{n}\}_{n \geq 1}$,  will occupy a
significant role. The system is   
a complete orthogonal system in $L^{2}(\Omega)$ and $H^{1}_{0}(\Omega)$,
and (normalized to be) orthonormal in $L^{2}(\Omega)$.
A comprehensive survey of Laplacian eigenfunctions for important geometric
domains may be found in \cite{GN}.
In this section, $J = [0, T_{1}]$, where $T_{1}$ is an arbitrary terminal
time.
\begin{definition}
\label{Fourier}
Denote by $Q_{n}$ the orthogonal projection in $H^{1}_{0}(\Omega)$ onto the
linear span ${\mathcal M}_{n}$ of $\{\phi_{k}\}_{1 \leq k \leq n}$,
and by ${\tilde Q}_{n}$ the corresponding orthogonal projection in
$L^{2}(\Omega)$. Further,
denote by ${\mathcal P}_{n}$ the (closed) linear subspace of $C(J;
H^{1}_{0}(\Omega))$ defined by 
$$
{\mathcal P}_{n} = \left\{\sum_{k=1}^{n} \alpha_{k}(t) \phi_{k}({\bf x}):
\alpha_{k} \in C(J), k= 1, \dots, n \right\},
$$
and write $P_{n}$ for the projection in $C(J; H^{1}_{0}(\Omega))$ onto 
${\mathcal P}_{n}$, and ${\tilde {\mathcal P}}_{n}, 
{\tilde P}_{n}$, for the spaces and projections in  
$C(J; L^{2}(\Omega))$.
\end{definition}
A direct implementation of the definition shows that $Q_{n}u$ has the same
formal representation as ${\tilde Q}_{n}u$. Furthermore, as
verified in \cite{J3}, ${\mathcal P}_{n}$ is closed, $P_{n} = P_{n}^{2}$
is a standard projection, and $P_{n}$ can be interpreted, for each fixed
$t \in J$, as the orthogonal projection $Q_{n}$ onto ${\mathcal M}_{n}$.
Similar remarks apply to ${\tilde P}_{n}$. 
Moreover, 
$P_{n}f$ converges to $f$ in  $C(J; H^{1}_{0}(\Omega))$ and  
${\tilde P}_{n}g$ converges to $g$ in
$C(J; L^{2}(\Omega))$.
These statements follow from an estimation of the maxima on $J$ of the respective error functions.

\subsection{Projections and classical solutions}  
Designate by $F_{n}$ the pair $(0, {\tilde P}_{n}{\tilde F})$ 
which is a member of $C(J; D(U))$. 
Set $G=(u_{0}, v_{0}) \in {\mathcal H}$ 
and designate by $G_{n}$ the pair
$(Q_{n} u_{0}, {\tilde Q}_{n} v_{0})$. 
It follows from Proposition \ref{contraction}, part (2), that 
\begin{equation}
\label{represent2}
V_{n}(t) = T(t) G_{n} + \int_{0}^{t} T(t-s) F_{n}(s) \;ds
\end{equation}
is a strong solution of the initial/boundary-value problem,
\begin{eqnarray}
u_{tt} &=& - \nu u_{t} + \kappa \nabla^{2} u + {\tilde P}_{n}{\tilde F}, 
\nonumber \\
u({\bf x},t)  &=& 0, \; {\bf x} \in \partial \Omega, 0 < t \leq T_{1}, 
\nonumber \\ 
u({\bf x}, 0) = Q_{n} u_{0}({\bf x}), && 
u_{t}({\bf x},0) = {\tilde Q}_{n} v_{0}({\bf x}), \; {\bf x} \in \Omega. 
\label{clapp}
\end{eqnarray}
The $V_{n}$ will serve as approximations to establish existence of weak
solutions on $J$ in section \ref{sec3.2}, and will be shown to have
eigenfunction components in section \ref{sec3.3}.  
\subsection{Weak solutions for the linear inhomogeneous equation}
\label{sec3.2}

\begin{definition}
\label{weaksol}
For $T_{1} > 0$ arbitrary, 
define $J = [0, T_{1}]$ and suppose ${\tilde F} \in C(J; L^{2}(\Omega))$ and 
$(u_{0}, v_{0}) \in {\mathcal H}$ are given.
Suppose there exists 
$u$ such that: 
\begin{enumerate}
\item
$(u, u_{t})$ is 
continuous from $J$
to ${\mathcal H}$, with $u_{tt}$ continuous from $J$ to
$H^{-1}(\Omega)$.
\item
The initial conditions are satisfied:
$(u(0, {\bf x}), u_{t}(0,{\bf x})) = (u_{0}({\bf x}), v_{0}({\bf x})), 
\; {\bf x} \in \Omega$. 
\item
$\forall \; \phi \in C(J; H^{1}_{0}(\Omega)), 
\forall \; 0 < t \leq T_{1}$, 
\begin{equation}
(u_{tt}, \phi)_{L^{2}}  = 
-\nu(u_{t}, \phi)_{L^{2}}  - \kappa (\nabla u,\nabla \phi)_{L^{2}}  + 
({\tilde F},\phi)_{L^{2}}.  
\label{wcibvp}
\end{equation}
\end{enumerate}
Then we say that $u$ is a weak solution for the linear 
inhomogeneous equation, i.\thinspace e.\thinspace, the version of
the system
(\ref{cibvp}) with ${\mathcal F}(u) = {\tilde F}$.
\end{definition}
\begin{theorem}
\label{WEUL}
Given ${\tilde F} \in C(J; L^{2})$, and $G = (u_{0}, v_{0}) \in {\mathcal H}$, 
the pair given by 
\begin{equation}
\label{Katorepw}
V(t) = T(t)G + 
\int_{0}^{t} T(t-s) (0, {\tilde F}(s)) \;ds
\end{equation}
is a unique weak solution, as defined in Definition
\ref{weaksol}.
\end{theorem}
\begin{proof}
The classical solutions $V_{n}$ as defined by (\ref{represent2}) are seen to be 
weak solutions of the system defined by projection, 
upon application of the Gauss-Green theorem, 
and converge uniformly
in ${\mathcal H}$ to $V$. This convergence makes direct use of the
contractive semigroup properties of $T(t)$ and the projection properties. 
To verify this, suppose that 
$(u_{n}, (u_{n})_{t}) = V_{n} \in D(U)$ is given by 
(\ref{represent2}). For $\phi \in C(J; C^{\infty}_{0}({\mathbb R}^{N}))$, 
we obtain upon direct multiplication and integration,
\begin{equation}
((u_{n})_{tt}, \phi)_{L^{2}}  = 
-\nu((u_{n})_{t}, \phi)_{L^{2}}  
+ \kappa (\Delta u_{n}, \phi)_{L^{2}}  + 
(F_{n},\phi)_{L^{2}}.  
\label{wcibvp5}
\end{equation}
An application of the Gauss-Green theorem, followed by the limit $n
\rightarrow \infty$, gives
\begin{equation}
(W, \phi)_{L^{2}}  = 
-\nu(u_{t}, \phi)_{L^{2}}  - \kappa (\nabla u,\nabla \phi)_{L^{2}}  + 
({\tilde F},\phi)_{L^{2}}.  
\label{wcibvp6}
\end{equation}
Here we have used the convergence of $V_{n}$ to $V$ in $C(J; {\mathcal
H})$. Also, we have identified $W$ with a member of $C(J; H^{-1})$.
Further,
$$
(u_{n})_{tt} \rightarrow W, \; \mbox{in} \; C(J; H^{-1}). 
$$
Upon integration, it follows that 
$$
(u_{n})_{t} -{\tilde Q}_{n} v_{0}  \rightarrow \int_{0}^{t} W \;ds,
\; \mbox{in} \; C(J; H^{-1}). 
$$
Independently, we have established that $(u_{n})_{t} \rightarrow u_{t}$ 
in $C(J; L^{2})$ 
and 
${\tilde Q}_{n} v_{0} \rightarrow v_{0}$ in $L^{2}$
It follows that $W = u_{tt}$. This concludes the proof of existence of weak solutions upon use of 
the denseness of the test functions
$C^{\infty}_{0}(\Omega)$.

In order to verify uniqueness, we make some preliminary statements.
Notice that the difference of weak solutions of the inhomogeneous initial/boundary-value problem 
is a weak solution of the homogeneous problem with homogeneous initial conditions. This has only the zero solution. Indeed,
suppose $f \in C(J; {\mathcal H})$ 
is a weak solution of $df/dt = Uf$, with homogeneous initial conditions. 
For the time integrated version, we may take $L^{2}$ inner products:
$$
(f(t), f(t))_{L^{2}} = \int_{0}^{t} (Uf(s), f(s))_{L^{2}} \; ds.
$$ 
The rhs is nonpositive, as shown in section two in the proof of part (2) of Proposition \ref{contraction}. 
Since the lhs is nonnegative, we conclude that $f = 0$.
This establishes uniqueness of weak solutions.
\end{proof}
\subsection{Approximation by eigenfunctions}
\label{sec3.3}
In this section, we examine more closely the explicit role of
eigenfunction approximation. 
The section is independent of the rest of the article, and is included to show that
when approximation of the inhomogeneous term is carried out by members of ${\tilde {\mathcal P}}_{n}$, the
solution formula generates approximations from this space. 
This is accomplished by a rigorous
implementation of classical separation of variables. In particular, by
direct calculation, we verify the semigroup invariance on the
approximation spaces.
\begin{definition}
\label{seppde}
We say that $\psi_{n}(t) \phi_{n}({\bf x}) $ is a separated solution of the damped wave equation, 
\begin{equation*}
u_{tt} = - \nu u_{t} + \kappa \nabla^{2} u, 
\end{equation*}
with homogeneous
boundary conditions and zero forcing if $\psi_{n}$ satisfies the ordinary differential equation,
\begin{equation}
\label{sepode}
\psi_{n}^{\prime \prime} + \nu \psi_{n}^{\prime} + \lambda_{n} \psi_{n} = 0, 
\end{equation}
on the time interval $t > 0$. Here, $(\lambda_{n}, \phi_{n})$ are an eigenvalue/eigenfunction pair for the operator
$-\kappa \nabla^{2}$ with Dirichlet boundary conditions on $\Omega$.
\end{definition}
\begin{lemma}
Separated solutions satisfy the damped wave equation as strong solutions and are members of the regularity class
$C([0, \infty); D(U))$.  
They can be explicitly written as follows.
\begin{enumerate}
\item
$4 \lambda_{k} - \nu^{2} > 0$.
In this case, the separated solutions are of the form,
$(a C_{k}(t) + b S_{k}(t)) \phi_{k}(x)$, where
\begin{eqnarray*}
C_{k}(t) :=\exp(-(\nu/2)t) \cos (\omega_{k}t), &&  
S_{k}(t) :=\exp(-(\nu/2)t) \sin (\omega_{k}t), \\
dC_{k}/dt = (-\nu/2) C_{k}(t) - \omega_{k} S_{k}(t), && dS_{k}/dt = (-\nu/2) S_{k}(t) + \omega_{k} C_{k}(t), 
\end{eqnarray*}
$$
\omega_{k} =\frac{\sqrt{4 \lambda_{k} 
 -\nu^{2}}}{2}. 
$$  
\item
$4 \lambda_{k} - \nu^{2} = 0$.
In this case, the separated solutions are constructed by replacing $C_{k}$
and $S_{k}$ by $c_{k}$ and $s_{k}$, where 
\begin{eqnarray*}
c_{k}(t) :=\exp(-(\nu/2)t), && 
s_{k}(t) := t\exp(-(\nu/2)t), \\
dc_{k}/dt = (-\nu/2) c_{k}(t), && ds_{k}/dt = c_{k}(t) - (\nu/2) s_{k}(t).
\end{eqnarray*}
\item
$4 \lambda_{k} - \nu^{2} < 0$.
Here, the replacements are $D_{k}$ and $E_{k}$, where
\begin{eqnarray*}
D_{k}(t) :=\exp((-\nu/2 + \rho_{k})t), && 
E_{k}(t) := \exp((-\nu/2 -\rho_{k})t), \\
dD_{k}/dt = (-\nu/2 + \rho_{k}) D_{k}(t), && dE_{k}/dt = 
(-\nu/2 - \rho_{k}) E_{k}(t).
\end{eqnarray*}
Here, $\rho_{k} = 
\sqrt{\nu^{2} - 4 \lambda_{k}}/2$. 
\end{enumerate}
\end{lemma}
\begin{proof}
The regularity class for the separated solutions follows directly from their definition, given in the statement of
the lemma. 
A direct substitution into the damped wave equation of $\psi_{n} \phi_{n}$ verifies the first statement 
when (\ref{sepode}) is employed for $\psi_{n}$ and the eigenfunction property is employed for $\phi_{n}$. The
stated solution trichotomy depends upon the solutions of the associated characteristic equation,
$$
m^{2} + \nu m + \lambda_{n} = 0,
$$ 
which leads to the above cases, depending on the discriminant $D = \nu^{2} - 4 \lambda_{n}$. These are determined
by $D < 0, D = 0, D > 0$, resp. 
\end{proof}
\begin{lemma}
Suppose that the pair $(0, \phi_{n})$ is given as an initial condition for the damped wave equation. There is a
unique separated solution pair $(u_{\ast}, v_{\ast})$, with $v_{\ast} = du_{\ast}/dt$,
such that the pair $(u_{\ast}, v_{\ast})$ assumes the evaluation  
$(0, \phi_{n})$ at $t = 0$. In particular, this pair is precisely $T(t) (0, \phi_{n})$.  
We have the following formulas. 
\begin{enumerate}
\item
$4 \lambda_{k} - \nu^{2} > 0$.
We have
$T(t) (0, \phi_{k}) = (u_{\ast}, v_{\ast})$, where 
$$
u_{\ast}({\bf x},t) = (1/\omega_{k}) S_{k}(t) \phi_{k}({\bf x}), \;
v_{\ast}({\bf x},t) = (C_{k}(t) - \nu/(2 \omega_{k}) S_{k}(t))
\phi_{k}({\bf x}).  
$$
\item
$4 \lambda_{k} - \nu^{2} = 0$.
We have
$T(t) (0, \phi_{k}) = (u_{\ast}, v_{\ast})$, where 
$$
u_{\ast}({\bf x},t) =  s_{k}(t) \phi_{k}({\bf x}), \;
v_{\ast}({\bf x},t) = (c_{k}(t) - (\nu/2) s_{k}(t))
\phi_{k}({\bf x}).  
$$
\item
$4 \lambda_{k} - \nu^{2} < 0$.
We have
$T(t) (0, \phi_{k}) = (u_{\ast}, v_{\ast})$, where 
$$
u_{\ast}({\bf x},t) =  1/(2 \rho_{k})(D_{k}(t) -  E_{k}(t)) \phi_{k}({\bf x}), \;
$$
$$
v_{\ast}({\bf x},t) = (1/2)[(1 - \nu/(2 \rho_{k})) D_{k}(t) 
+ (1 +  \nu/(2 \rho_{k})) E_{k}(t)]
\phi_{k}({\bf x}).  
$$
\end{enumerate}
\end{lemma}
\begin{proof}
The semigroup is uniquely determined by the conditions that 
$T(0) = {\mathcal I}, \;(d/dt) T(t) = UT(t)$. 
The formulas cited, derived from the previous lemma, satisfy these conditions when evaluated at $(0, \phi_{n})$.
\end{proof}
\begin{theorem}
\label{EIG}
The components of $V_{n}$ are in $C(J; {\mathcal P}_{n} \times
{\tilde {\mathcal P}}_{n}) $.
In particular, any weak solution $V$ of the linear inhomogeneous equation  
can be approximated in $C(J; {\mathcal H})$ by eigenfunctions via a
standard Galerkin procedure.
\end{theorem}
\begin{proof}
The previous lemmas have allowed us 
to examine the action of $T(t)$ on $(0, \phi_{k})$. 
Linearity of the semigroup $T$ permits extension to members of the form 
$\sum_{k=1}^{n}\alpha_{k}(s) (0,\phi_{k}(x))$. Indeed, motivated by the representation for the semigroup action, we
compute
$$
T(t-s)
\sum_{k=1}^{n}\alpha_{k}(s) (0,\phi_{k}({\bf x})) = 
\sum_{k=1}^{n}\alpha_{k}(s) T(t-s) (0,\phi_{k}({\bf x}))  
$$
$$
= \sum_{k=1}^{n}\alpha_{k}(s) (\beta_{k}(t-s), \gamma_{k}(t-s))\phi_{k}({\bf x}),  
$$  
where the functions $\beta_{k}, \gamma_{k}$ are specified in the preceding lemma.
In all three cases, 
since formula (\ref{represent2}) involves integration in the variable $s$, 
we conclude that both components are functions from $J$ to
${\mathcal M}_{n}$.
This argument also holds for the approximation of the initial conditions.
It follows that the convergence of $V_{n}$ to $V$ can be interpreted as
convergence of the Faedo-Galerkin method.
\end{proof}
\begin{remark}
As mentioned earlier, the other results of this article are independent of the preceding theorem.  
\end{remark}
\subsection{Solution stability as $t \rightarrow \infty$}
Suppose ${\tilde F} \in C([0, \infty); L^{2})$. 
We give a sufficient condition on the norm growth of ${\tilde F}$ so that
the ${\mathcal H}$ norm of $V$,
defined on each interval $[0, T_{1}]$ by Theorem \ref{WEUL}, 
remains bounded in norm as $t \rightarrow \infty$.
\begin{corollary}
Suppose ${\tilde F} \in 
C([0, \infty); L^{2}) \cap 
L^{1}([0, \infty); L^{2})$.
Then $\|V\|_{C([0, T_{1}]; {\mathcal H})}$
remains bounded as  $T_{1}
\rightarrow  \infty$.  
\end{corollary}
\begin{proof}
This is implied by the contractive property of the semigroup on ${\mathcal
H}$.
\end{proof}
\begin{remark}
A similar result can be formulated for strong solutions, with the
appropriate assumptions on $u_{0}, v_{0}, F$. Interestingly, because of
the decay of $\|T(t)\|_{D(U)}$, one has the additional asymptotic property
that the term in $V(t)$ due to the initial data decays to zero as 
$t \rightarrow \infty$. 
\end{remark}
\section{The Nonlinear System with Operator Forcing}
In this section we construct weak solutions for  
the general nonlinear system (\ref{cibvp}). We begin by defining the
properties of the forcing term.
\begin{definition}
\label{defFforce}
We consider an operator ${\mathcal F}$ with the following properties.
\begin{enumerate}
\item
${\mathcal F}$ is defined on $L^{2}(\Omega)$ with range in $L^{2}(\Omega)$.
\item
${\mathcal F}$ is locally bounded: 
For every bounded set $B$ in $L^{2}(\Omega)$, 
${\mathcal F}(B)$ is bounded in
$L^{2}(\Omega)$. 
\item
${\mathcal F}$ is continuous from $L^{2}(\Omega)$ to $L^{2}(\Omega)$. 
\end{enumerate}
\end{definition}
\begin{remark}
We observe that Lipschitz continuity, even local Lipschitz continuity, 
is not assumed for ${\mathcal F}$, since many 
significant applications do not satisfy this property.  
The preceding hypothesis is discussed by Krasnosel'skii \cite[Th.\ 2.1,
p.\ 22]{Kras}. The class of Nemytskii operators with growth conditions is known to imply these
conditions \cite{Show}. 
\end{remark}
\begin{remark}
\label{extend}
The hypotheses for the nonlinear forcing term are not as restrictive as first appears. Although forcing by
function composition is typical in the literature, other types of forcing are permitted, including convolution
such as occurs in mollification. For example, ${\mathcal F}(u) = \phi_{\epsilon} \ast |u|^{2}$ satisfies the
required hypotheses, where $\phi_{\epsilon}$ is a classical smooth mollifier. In fact, by Young's convolution
inequality, one sees that ${\mathcal F}$ is locally Lipschitz continuous on $L^{2}$. When appropriate compactness
is available, one can use this theory to extend results to nonlinearities stronger than those which can be treated
by Nemytskii operators.    
\end{remark}
\begin{definition}
\label{NWS}
Suppose there exist $T_{1}$ and $u$ such that
\begin{enumerate}
\item
$(u, u_{t})$ is 
continuous from $J$
to ${\mathcal H}$, with $u_{tt}$ continuous from $J$ to
$H^{-1}(\Omega)$.
\item
The initial conditions are satisfied:
$(u({\bf x}, 0), u_{t}({\bf x}, 0))  = 
(u_{0}({\bf x}), v_{0}({\bf x})), \; 
\mbox{\rm for} \; {\bf x} \in \Omega$. \nonumber \\
\item
$\forall \; \phi \in C(J; H^{1}_{0}(\Omega)), 
\forall \; 0 < t \leq T_{1}$, 
\begin{eqnarray}
(u_{tt}, \phi)_{L^{2}}  &=& 
-\nu(u_{t}, \phi)_{L^{2}}  - \kappa (\nabla u,\nabla \phi)_{L^{2}}  + 
({\mathcal F}(u),\phi)_{L^{2}}.  
\label{cibvp2}
\end{eqnarray}
\end{enumerate}
Then $u$ is said to be a weak solution of the 
system (\ref{cibvp}).
If $u$ is defined on an interval $[0, \tau)$, with the property that its
restriction to any closed interval $[0, T_{1}]$ is a weak solution, then 
$u$ is called maximal if it has no proper extension which is a weak solution. 
\end{definition}

\subsection{Operator properties}
We begin with a definition, which identifies two fundamental constants.
\begin{definition}
Define $\omega_{0}$:
\begin{equation}
\label{Floc}
\omega_{0} = \sup_{u: \|u - u_{0}\|_{L^{2}} \leq 1} 
\|{\mathcal F}(u)\|_{L^{2}}.
\end{equation}
Define $s_{0}$ (by continuity): 
\begin{equation}
\label{szero}
\|T(t)(u_{0}, v_{0}) - (u_{0}, v_{0}) \|_{{\mathcal H}} \leq 1/2, 
\; 0 \leq t \leq s_{0}.
\end{equation}
\end{definition}
\begin{proposition}
\label{invV}
There is a time $t_{1} > 0$ such that 
the operator,
$$
{\mathcal V}(u,v)(t) = V(t), 
0 \leq t \leq t_{1}, 
$$
where $V(t)$ is defined by Theorem \ref{WEUL} with $T_{1} = t_{1}$,
and ${\tilde F}(t) = {\mathcal F}(u(t))$,
maps 
$$
{\mathcal B}_{0} = \{(u,v) \in C(J_{1}; {\mathcal H}): 
\|(u,v) - (u_{0}, v_{0})\|_{C(J_{1}; {\mathcal H})}
\leq 1\}  
$$
into itself. Here, $J_{1} = [0, t_{1}]$. 
Furthermore, 
the
operator ${\mathcal V}: {\mathcal B}_{0} \mapsto {\mathcal B}_{0}$ 
is uniformly equicontinuous on $J_{1}$ 
in the sense that its range is a
uniformly equicontinuous family on $J_{1}$.
The time $t_{1}$ can be explicitly represented as,
\begin{equation}
\label{deft1}
t_{1} = \min (s_{0}, 1/(2 \omega_{0})).
\end{equation}
\end{proposition}
\begin{proof}
Since $T_{1}$ as used in the previous section is arbitrary, it follows
that the mapping ${\mathcal V}$ is well-defined, for any choice of $t_{1}$.
Theorem \ref{WEUL} is also used to verify the invariance statement. 
Indeed, we estimate $\|V(t)- G\|_{{\mathcal H}}$ as follows. 
For each $ t \in J_{1}$,
\begin{equation*}
\|V(t)- G\|_{{\mathcal H}} \leq 
\|T(t)G - G\|_{{\mathcal H}} + \int_{0}^{t}  
\|{\mathcal F}(u(s))\|_{L^{2}} \; ds.
\end{equation*}
Here, we have used the contractive property of $T$ on ${\mathcal H}$.
The first term is bounded by $1/2$ by (\ref{szero}), while the second term
is also bounded by $1/2$ by the combination of (\ref{Floc}) and
(\ref{deft1}). In order to establish uniform equicontinuity, let
$0 \leq \tau_{1} < \tau_{2} \leq t_{1} $ be given in $J_{1}$ and
$(u, v) \in {\mathcal B}_{0}$. We first write the representation,
$$
V(\tau_{2}) - V(\tau_{1}) = [T(\tau_{2}) - T(\tau_{1})]G \;
+ \int_{0}^{\tau_{1}}[T(\tau_{2} - s) - T(\tau_{1} - s)] 
(0, {\tilde F}(s)) \; ds
$$
$$
+ \int_{\tau_{1}}^{\tau_{2}}T(\tau_{2} - s) (0, {\tilde F}(s)) \; ds. 
$$ 
As previously, we use (\ref{Floc}) 
and the contractive property of $T$ to
conclude that 
$$
\|V(\tau_{2}) - V(\tau_{1}) \|_{{\mathcal H}} \leq \|T(\tau_{2}) -
T(\tau_{1})\|_{{\mathcal H}} \|G\|_{{\mathcal H}} 
$$
$$
 +  (\omega_{0}/2)[t_{1}  \sup_{0 \leq t \leq t_{1} - \delta} 
\|T(t+\delta) - T(t)\|_{{\mathcal H}} + \delta]. 
$$
Here, $\delta = \tau_{2} - \tau_{1}$. When the uniform operator continuity property of $\|T(t)\|_{{\mathcal H}}$ is utilized, one obtains
the uniform equicontinuity.
\end{proof}
\subsection{The local solution and the maximal solution}
We will use the method of successive approximation to establish the
existence of a local solution. Although it would be possible to use the
Schauder fixed point theorem on the eigenspaces, followed by a convergence
analysis, the use of successive approximation appears to be a more
constructive approach.
Moreover, it provides a proof independent of Theorem \ref{EIG}.
\begin{theorem}
\label{localsol}
There is a weak solution 
satisfying (\ref{cibvp2}) on $[0, t_{1}]$.
Here, $t_{1}$ is defined in (\ref{deft1}). The solution may be obtained by
subsequential convergence, derived from successive solution of linear
problems (see (\ref{successive}) below).
\end{theorem}
\begin{proof}
Define the sequence of successive approximations, 
\begin{equation}
(u_{k}, v_{k}) = {\mathcal V}(u_{k-1},
v_{k-1}), k \geq 1.  
\label{successive}
\end{equation}
The mapping ${\mathcal V}$ is defined in Proposition \ref{invV}, and its
fixed points are weak solutions.
This sequence is contained in ${\mathcal B}_{0}$, hence is bounded in ${\mathcal H}$, uniformly in
$t \in J_{1}$.
By the first proposition  of the
appendix, we 
obtain a subsequence, weakly convergent in ${\mathcal H}$ pointwise in $t \in J_{1}$, to an element,
$(u, v)$. This follows from the identifications $X = Y = {\mathcal H}$ and the equicontinuity established in
Proposition \ref{invV}.
We claim that  
$(u, v)$ is a fixed point, ${\mathcal V}(u,v) = (u, v)$, hence a weak
solution.
The principal question is the uniform $L^{2}$ convergence in $t$ of $u_{k}$ 
to $u$. 
This sequence is already known to be 
$L^{2}$ convergent, pointwise  in $t$,
by the Rellich theorem.
We will show that a subsequence of $u_{k}$ is convergent to $u$ in
$C(J_{1}; L^{2})$, which allows the continuity of ${\mathcal F}$ to be
applied 
within the rhs of the representation sequence for ${\mathcal V}$.
In fact,
the solutions of the inhomogeneous equations are given by (cf.\ Theorem \ref{WEUL})
\begin{equation}
{\mathcal V}(u_{k}, v_{k})(t) = T(t)G + 
\int_{0}^{t} T(t-s) (0, {\tilde F}(u_{k-1})(s)) \;ds
\end{equation}
The rhs converges in $C(J_{1}; L^{2})$ to ${\mathcal V}(u,v)$, 
under the assumption of the uniform convergence of 
$u_{k-1}(s)$. We establish this now.
Consider the decomposition,
$$
u - u_{k} = (u - {\tilde P}_{n}u) + ({\tilde P}_{n}u - {\tilde P}_{n}u_{k})
+ ({\tilde P}_{n}u_{k} - u_{k}).
$$
Suppose a sequence $\epsilon_{\ell} \rightarrow 0$ is specified.
The first term can be estimated in $C(J_{1}; L^{2})$ for sufficiently large
$n$, not to exceed $\epsilon_{\ell}/3$. It can be shown (see the following
Lemma \ref{nwidth}) that a similar statement holds, 
uniformly in $k$, for the third term. 
The estimation of the second term is more complicated, and is carried out
by a variant of Cantor's diagonalization argument. We first set up the
tableau $\{{\mathcal R}_{j}\}_{j \geq 1}$ of nested sequences 
${\mathcal R}_{j}$. These will be chosen to have the property that  
$$
\lim_{m \rightarrow \infty} {\tilde P}_{j} u_{jm}={\tilde P}_{j}u, \; j \geq 1. 
$$
Here, $\{{\mathcal R}_{j}\} = \{u_{jm}\}_{m \geq 1}, \; j =1,2, \dots$.
Suppose that the first $n-1$ nested rows ${\mathcal R}_{j}$ of the
tableau have been defined, with the stated property. 
Observe that 
${\tilde P}_{n}{\mathcal R}_{n-1}$
is a precompact set in $C(J_{1}; L^{2})$, 
which is a consequence of 
Lemma \ref{precompact} to follow.
Select a convergent subsequence 
${\tilde P}_{n} {\mathcal R}_{n}$. 
Pointwise convergence established earlier shows that the 
necessary limit is ${\tilde P}_{n}u$. 
Now define the $n$th row of the tableau to be  
$\{{\mathcal R}_{n}\}$. 
Proceed inductively to obtain the tableau. 

We are now ready for the variant of Cantor selection. 
Define (the subsequence) $\{u_{n_{\ell}} \}$ as follows. Given
$\epsilon_{\ell}$, choose $n = n_{\ell}$ such that the first and third
terms of the decomposition are estimated, as discussed earlier. 
The appropriate $\ell$th element of the sequence is selected from row
$n_{\ell}$ so that its projection (and those of its successors) lies within
$\epsilon_{\ell}/3$ in norm, of the projection of $u$. 
This inductively defines a subsequence of 
(the first components of) the original
weakly convergent sequence, which is uniformly convergent when viewed in
$L^{2}$. Uniform convergence is preserved when ${\mathcal F}$ is applied.
Earlier arguments establish the fixed point.
\end{proof}
We now state and prove the lemmas used in the preceding proof.
\begin{lemma}
\label{precompact}
Let $(u_{k}, v_{k})$ be a sequence in ${\mathcal V}({\mathcal B}_{0})$. Then, for each fixed positive integer $n$,
$$
\{{\tilde P}_{n} u_{k}\}
$$
is a precompact set in $C(J_{1}; L^{2})$.
\end{lemma}
\begin{proof}
We shall make use of the second proposition of the appendix. 
Define $B = \{u \in H^{1}_{0}: \|u - u_{0}\|_{H^{1}_{0}} \leq 1 \}$. 
Now make the identifications, 
$$X = J_{1}, \; Y = L^{2}, \;  
C = {\tilde Q}_{n}B, \; {\mathcal F}_{0} =  
\{{\tilde P}_{n} u_{k}\}.
$$
The uniform equicontinuity has been established in 
Proposition \ref{invV} in the (stronger) case where the ${\mathcal H}$ norm is employed.  
It therefore holds for $L^{2}$. 
Since $C$ is a closed bounded subset of a finite dimensional space, it is compact as required.The
precompactness now follows from Proposition A.2.  
\end{proof}
\begin{lemma}
\label{nwidth}
Suppose that $\{h_{k}\}$ is a sequence which is bounded in $C(J;
H^{1}_{0}(\Omega))$. 
Then, uniformly in $k$,  
${\tilde P}_{n} h_{k} \rightarrow h_{k}, \; n \rightarrow \infty, \; \mbox{in} \; C(J_{1}; L^{2})$.
\end{lemma}
\begin{proof}
Suppose that $b$ is a bound for $h_{k}$ in $C(J, H^{1}_{0}(\Omega))$. 
For each fixed $t$, consider the
self-adjoint operator $R = -(\kappa/b^{2})\nabla^{2}$ on ${\mathcal D}(U)$, 
and the
ellipsoid in $L^{2}(\Omega)$ defined by
$$
{\mathcal R}(t) = \{u \in {\mathcal D}(U): (Ru, u)_{L^{2}} \leq 1\}.
$$ 
Then the $L^{2}(\Omega)\; n$-width of ${\mathcal R}(t)$ 
is attained by the subspace 
${\mathcal M}_{n}$ for each fixed $t$ \cite{J1}, so that, by closure,  
this holds true for the widths $d_{n}$ 
of the closed $H^{1}_{0}(\Omega)$-ball of radius
$b$. The latter are directly computable \cite{J1} in terms of the eigenvalues of
$R$: $d_{n} = b \lambda_{n+1}^{-1/2}$, where we retain the earlier meaning
of $\lambda_{n}$. 
This implies that, in $L^{2}(\Omega)$, uniformly in $t$, and in $k$,
${\tilde Q}_{n} h_{k} - h_{k} \rightarrow 0, \; n \rightarrow \infty$.
Equivalently, uniformly in $k$,
$$
{\tilde P}_{n} h_{k} - h_{k} \rightarrow 0, \; n \rightarrow \infty.
$$ 
\end{proof}
The following corollary follows from the preceding arguments.
\begin{corollary}
\label{extendlocal}
Suppose that $(u_{1}, v_{1}) = (u(t_{1}), v(t_{1}))$ and define 
$\omega_{1}, t_{2}, {\mathcal V}$ in analogy with the procedure of  
Theorem \ref{localsol}. This extends the solution to $[0, t_{1} + t_{2}]$, and
the procedure can be repeated by induction to any interval $[0, \tau_{k}]$,
where 
$$
\tau_{k} = \sum_{j=1}^{k} t_{j}, \; k \geq 1.
$$
\end{corollary}
\begin{proof}
Given the local solution as defined in Theorem \ref{localsol}, one again
defines a local solution on 
$[t_{1}, t_{1} + t_{2}]$ by the same method. It remains to verify that the two
local solutions are restrictions of a global solution on  
$[0, t_{1} + t_{2}]$. This is immediate from Definition \ref{NWS},
however, together with the equivalence of a weak solution with the
representation of the operator ${\mathcal V}$.
The induction proceeds similarly.
\end{proof}
\begin{theorem}
\label{maxsolex}
There is a maximal solution, i.\thinspace e.\thinspace, a solution with no
proper extension. The time interval for the maximal solution is
$[0, \tau)$, where
$$
 \tau = 
\lim_{k \rightarrow \infty} \tau_{k}.
$$
The solution, designated $V$,  
satisfies Definition \ref{NWS} on every compact subinterval
of 
$[0, \tau)$.
\end{theorem}
\begin{proof}
By Corollary \ref{extendlocal}, applied inductively for $k = 1, 2, \dots,$, we conclude that
there is a solution $V$ defined on $[0, \tau)$.
Since $[0, \tau) = \cup_{k=1}^{\infty} [0, \tau_{k}]$, 
the solution
satisfies Definition \ref{NWS} on every compact subinterval
of 
$[0, \tau)$.
The only statement which is not immediate from the construction is that
there is no proper extension of $V$, which is a weak solution. 
Suppose, for a contradiction, that $V_{\rm ext}$ is a proper extension of
$V$. 
In particular, 
the following are uniformly continuous functions on $[0, \tau]$ into $L^{2}$:
$$
u_{\mbox{\rm ext}}, {\mathcal F}(u_{\rm ext}). 
$$
Further,
\begin{equation}
\label{limomega}
V_{\rm ext}(\tau) = \lim_{k \rightarrow \infty} V(\tau_{k}).
\end{equation}
Since $\tau < \infty$, 
$$t_{k+1} - t_{k} \rightarrow 0, \; k \rightarrow \infty.$$
There are two possibilities in this case.
Either the numbers $\omega_{k}$, 
defined in (\ref{Floc}) for $k = 0$, and
selected inductively for each
interval $[t_{k}, t_{k+1}]$, must be an unbounded sequence; or, the
numbers $s_{k}$, defined in (\ref{szero}) for $s_{0}$, 
and defined more generally by the formula,
\begin{equation}
\label{sk}
\|T(t)(u_{k}, v_{k}) - (u_{k}, v_{k}) \|_{{\mathcal H}} \leq 1/2, 
\; 0 \leq t \leq s_{k},
\end{equation}
must 
possess a subsequence which converges to
zero. Neither of these possibilities can occur. The first is excluded
since 
${\mathcal F}$ 
has bounded range when evaluated 
on the compact set $K = \{V_{\rm ext}(t): 0 \leq t \leq \tau\}$.
The second is excluded since 
$\|T(t) - I\|$ is uniformly
continuous on the compact set $K$ cited above, so that $s_{k}$ may be chosen independently of $k$. 
\end{proof}
A question of interest, since uniqueness has not been resolved in
the general case of $L^{2}$ forcing, is whether every solution in the
sense of Definition \ref{NWS}, has a maximal extension. The answer is
affirmative.
\begin{proposition}
\label{evhmax}
Every solution satisfying Definition \ref{NWS} has a maximal extension. It
may be constructed by the inductive procedure defined in
Corollary \ref{extendlocal} 
and Theorem \ref{maxsolex}. 
\end{proposition}
\begin{proof}
The proof begins with the assumed solution on $[0, T_{1}]$, and proceeds
identically as in the corollary and the theorem.
\end{proof}
\subsection{Further properties I: Uniqueness}
The semigroup representation of the weak solution is sufficient to prove
uniqueness of the maximal solution if ${\mathcal F}$ is locally Lipschitz.
We have the following.
\begin{proposition}
\label{Lipunique}
If ${\mathcal F}$ is locally Lipschitz, then the maximal solution is
unique.
\end{proposition}
\begin{proof}
Suppose that there exist two distinct maximal solutions $V_{1}$ and
$V_{2}$, defined on $[0, \tau_{1}), [0, \tau_{2})$, resp. 
Consider any compact interval $J = [0, T_{1}]$ common to these intervals. 
It is immediate that the ${\mathcal H}$ norm of $V_{1} - V_{2}$ 
on $J$ 
satisfies Gronwall's inequality under the locally
Lipschitz assumption. It follows that one of these functions is a proper
extension of the other, which contradicts maximality.
\end{proof}
\subsection{Further properties II: The constrained equation}
We now add a constraint to the system
(\ref{cibvp2}). The framework permits the constraint to be applied to
$(u,u_{t})$, not simply to $u$. 
We have the following result. 
\begin{theorem}
\label{conexist}
Suppose that ${\mathcal G}$ is a continuous real-valued functional defined on 
${\mathcal H}$, which is positive for the initial values.
For any maximal solution,
there is a positive number $\tau^{\prime} \leq \tau$ such that 
$[0, \tau^{\prime})$ is maximal for the constraint. In particular, 
if $\tau^{\prime} <
\tau$, the constraint fails at $t = \tau^{\prime}$.
\end{theorem}
\begin{proof}
Define $\tau^{\prime}$ to be the 
(possibly infinite) supremum of those $t$ for which the
constraint holds. If $\tau^{\prime} < \tau$, 
the ${\mathcal H}$ compactness of $\{V(t): 0 \leq t \leq \tau^\prime\}$ 
implies that 
${\mathcal G}
(u, u_{t})(\tau^{\prime}) = 0$, 
otherwise, the constraint interval
may be extended within the maximal interval for the solution.
\end{proof}
\section{Concluding Remarks}
We have analyzed an operator forcing for the nonlinear 
damped wave equation (equivalently, telegraph equation),
which reflects recent studies of this equation, allowing for an
intermediate or concurrent physical process. 
The forcing is continuous and (locally) bounded
on $L^{2}$. 
It includes, but is not restricted to, function composition. As mentioned earlier, it
includes smoothing of quadratic nonlinearities, and may be capable of producing existence when combined with
compactness methods. This approach has been utilized in \cite{TDDFT} in a different context. For nonlinearities
defined by negative powers, new ideas are required in the case of homogeneous Dirichlet boundary
conditions, as considered here. However, in certain cases involving inhomogeneous boundary conditions, it is
expected that the approaches of this article apply to the case of negative powers in one dimension. 
We have defined maximal solutions for the nonlinear forcing,
obtained locally by successive approximation. 
The nonlinear theory is based upon a global linear weak solution theory,
and approximation of the forcing. An auxiliary result, not required for the existence, details  
a rigorous separation of
variables, associated with 
eigenfunction approximation;  
this reduces to familiar modal approximation in one dimension. 
Furthermore, the
semigroup is invariant on the individual eigenfunctions, 
which is maintained by the
convolution formula. 
The addition of a continuous constraint is compatible with the existence
of a maximal solution.

Uniqueness holds in the locally Lipschitz case. It is not clear
whether uniqueness holds more generally. If it is ultimately
demonstrated that uniqueness fails, then the model would require further
physical principles, likely from thermodynamics.
In the cited MEMS application \cite{LW2}, 
the forcing is the result of a complex physical
process involving elastic/electrostatic interactions, and is best
described via operator composition. Also, the wave motion tracked by
the model is constrained to avoid `touchdown'.
This constraint is
readily handled by the choice ${\mathcal G}(u) = 1 + u$ in one spatial
dimension. Our framework addresses this case. 

For a pedagogical introduction to the telegraph equation, cf.\ \cite{MAP}.
Its derivation dates to the 1880s, when it was derived by Heaviside to
describe attenuated electrical transmission. 

We conclude with some comparison of our hypotheses with those appearing in the literature. In
\cite{Mawhin,Mawhin2}, bounded solutions are derived. These results are more general than 
appears, since they are derived in combination with maximum principles. It appears that the maximum principles are
dimension dependent. Absent maximum principles, 
the restriction on the nonlinear forcing
appears to be somewhat strong. In \cite{Ber}, the forcing is assumed bounded and periodic solutions are derived via the
Leray-Schauder theorem. As previously mentioned, the forcing of this article is not restricted to function composition,
and may have greater applicability. Furthermore, by avoiding the use of classical fixed point theorems, and relying
on subsequential convergence, we are able to minimize the required hypotheses. 
\appendix
\section{Subsequential Convergence for Bounded Families}
In section 4, we applied two basic compactness results,
taken from
\cite{Caz} and \cite{Simon}. 
Here, we quote the underlying results for the reader's convenience. 
The first is cited from \cite[Proposition 1.1.2(i)]{Caz}.
\begin{proposition}[Cazenave] 
\label{B1}
Let $I$ be a bounded open interval
of ${\mathbb R}$, let $X \hookrightarrow Y$ be Banach spaces. 
Let $(f_{n})_{n \in {\mathbb N}}$
be a bounded sequence in $C({\bar I}; Y)$. Assume that  
$f_{n}(t) \in X \; \forall (n, t) \in {\mathbb N} \times I$ and that
$\sup \{\|f_{n}(t)\|_{X}, 
(n, t) \in {\mathbb N} \times I \} = K < \infty$. Assume further that $f_{n}$
is uniformly equicontinuous in $Y$. If $X$ is reflexive, then the
following holds.
There exists a function  
$f \in C({\bar I}; Y)$ which is weakly continuous ${\bar I} \mapsto X$ 
and a subsequence $n_{k}$ such that
\begin{equation*}
\forall t \in {\bar I}, \;  f_{n_{k}}(t) \rightharpoonup f(t), k
\rightarrow \infty, \; \mbox{in} \; X. 
\end{equation*}
\end{proposition}
It is not asserted that convergence is uniform in $t$.

The next result is cited from \cite[Theorem 2.3.14]{Simon}. It is a
generalized Arzela-Ascoli theorem.
\begin{proposition}[Simon]
\label{B2}
Let $X$ be a separable metric space and $Y$ a complete metric space, with
$C \subset Y$ compact. Let ${\mathcal F}_{0}$ be a family of uniformly
equicontinuous functions from $X$ to $Y$ with Range$(f) \subset C$
for every $f \in {\mathcal F}_{0}$. Then any sequence in ${\mathcal F}_{0}$ has a
subsequence converging at each $x \in X$. If $X$ is compact, then 
${\mathcal F}_{0}$ is precompact in the uniform topology.
\end{proposition} 


\begin{thebibliography}{10}

\bibitem{A}
V.~Adolfsson, $L^{p}$ integrability of the second order derivatives of
Green potentials in convex domains. Pacific J.\ Math.\ {\bf 159} (1993),
no.\ 2, 201-225.

\bibitem{B}
C.J.K.~Batty, Tauberian theorems for the Laplace-Stieltjes transform,
Trans.\ Amer.\ Math.\ Soc.\ {\bf 322} (1990), no.\ 2, 783-804.

\bibitem{B2}
C.J.K.~Batty, Asymptotic behavior of semigroups of operators. {\it Functional
analysis and operator theory} (Warsaw, 1992), pp.\ 35-52. Banach
Center Publ.\ {\bf 30}, Polish Acad.\ Sci.\ Warsaw (1994). 

\bibitem{BD}
C.J.K.~Batty and T.~Duyckaerts, Non-uniform stability for bounded
semigroups on Banach spaces. J.\ Evol. Equ.\ {\bf 8} (2008), 765-780.

\bibitem{Ber}
C.~Bereanu, Periodic solutions of the nonlinear telegraph equations with
bounded nonlinearities. J.\ Math.\ Anal.\ Appl.\ {\bf 343} (2008),
no.\ 2, 758-762.

\bibitem{BB}
P.L.~Butzer and H.~Berens, {\it Semigroups of Operators and Approximation}.
Die Grundlehren der mathematischen Wissenschaften {\bf 145}, 
Springer-Verlag, New York, 1967.

\bibitem{Caz}
T.~Cazenave, {\em Semilinear Schr\"{o}dinger Equations}. 
Courant Lecture Notes in Mathematics 10. New York University, Courant
Institute of Mathematical Sciences. American Mathematical Society,
Providence R.\ I., 2003.

\bibitem{El-Azab}
M.S.~El-Azab and M.~El-Gamel, A numerical algorithm for the solution of
telegraph equations. Appl.\ Math.\ Comp.\ {\bf 190} (2007), no.\ 1, 757-764.

\bibitem{EN}
K.-J.~Engel and R.~Nagel, {\it One-parameter semigroups for linear
evolution equations}. Graduate Texts in Mathematics, vol. 194.
Springer-Verlag, New York, 2000.

\bibitem{GT}
D.~Gilbarg and N.~Trudinger, {\em Elliptic Partial Differential Equations
of Second Order}. Classics in Mathematics, Springer-Verlag, Berlin, 2001.

\bibitem{GN}
D.S.~Grebenkov and B.T.~Nguyen, Geometrical structure of Laplacian
eigenfunctions. SIAM Review {\bf 55} (2013), no.\ 4, 601-667.

\bibitem{Jang}
T.S.~Jang, A new solution procedure for the nonlinear telegraph equation.
Commun.\ Nonlinear Sci.\ Numer.\ Simul.\ {\bf 29} (2015), 307-326. 

\bibitem{J1} J.W.~Jerome, On the ${\mathcal L}_{2}\; n$-width of certain
classes of functions of several variables. J.\ Math.\ Anal.\ Appl. {\bf 20}
(1967), 110-123. 
 
\bibitem{J2}  J.W.~Jerome, {\em Approximation of Nonlinear Evolution
Equations}. Academic Press, Orlando, 1983. 

\bibitem{J3} J.W.~Jerome, A tight nonlinear approximation theory for time
dependent closed quantum systems. J.\ Numer.\ Math. {\bf 27}, no.\ 3 (2019), 141-154.  (arXiv:
1709.09063). 

\bibitem{TDDFT} J.W.~Jerome, Consistency of local density approximations and quantum corrections 
for time
dependent quantum systems. arXiv: 1707.09953. 

\bibitem{Kras}
M.\ Krasnosel'skii, {\em Topological Methods in the Theory of Non-Linear
Integral Equations}, Pergamon Press, 1964.

\bibitem{LW2}
P.~Laurencot and C.~Walker, Some singular equations modeling MEMS. Bull.\
Amer.\ Math.\ Soc.\ {\bf 54} (2017), no.\ 3, 437-479. 


\bibitem{Mawhin}
J.~Mawhin, R.~Ortega, and A.M.~Robles-P\'{e}rez,
A maximum principle for bounded solutions of the telegraph equations and
applications to nonlinear forcings. J.\ Math.\ Anal.\ Appl.\ {\bf 251}
(2000), 695-709.

\bibitem{Mawhin2}
J.~Mawhin, R.~Ortega, and A.M.~Robles-P\'{e}rez,
Maximum principles for bounded solutions of the telegraph equation in space dimensions two and three and
applications. J.\ Differential Equations {\bf 208} (2005), 42-63.

\bibitem{MAP}
M.A.~Pinsky, {\em Partial Differential Equations and Boundary-Value
Problems with Applications}, Reprint of the third edition. Pure and Appl.\
Undergraduate Texts, 15. Amer.\ Math.\ Soc., Providence, R.\ I., 2011.
 
\bibitem{Rauch}
J.~Rauch and M.~Taylor, Exponential decay of solutions to hyperbolic
equations on bounded domains. Indiana Univ.\ Math.\ J.\ {\bf 24} (1974),
79-86.
 
\bibitem{Show}
R.\ Showalter, {\em Monotone Operators in Banach Space and Nonlinear
Partial Differential Equations}, Mathematical Surveys and Monographs 
{\bf 49}, American Mathematical Society, Providence, R.\ I., 1997. 

\bibitem{Simon}
B.~Simon, {\em Real analysis, A Comprehensive Course in Analysis, Part 1},
Amer.\ Math.\ Soc., Providence, R.\ I., 2015.


\bibitem{S}
G.~Sklyar, On the decay of bounded semigroup on the domain of its
generator, Vietnam J.\ Math.\ {\bf 43} (2015), no.\ 2, 207-213. 


\bibitem{Phong}
Q.-P. Vu, Theorems of Katznelson-Tzafriri type for semigroups of
operators. J.\ Funct.\ Anal.\ {\bf 103} (1992), no.\ 1,  74-84.
\end{thebibliography}
\end{document}